\title[A new  example of a $(2,5)$-distribution with large symmetry]{A new example of a generic 2-distribution on a 5-manifold with large symmetry algebra}
\author{Boris Doubrov \and Artem Govorov}
\address{Belarusian State  University,  Nezavisimosti ave.~4,  220030 Minsk, Belarus}
\email{doubrov@islc.org,  artgovorov@mail.ru}
\subjclass[2010]{58D19, 58A30, 17B25.}
\keywords{Non-holonomic vector distributions, symmetry algebras, transitive actions, absolute parallelisms}
\newcommand{\sll}{\mathfrak{sl}}
\newcommand{\so}{\mathfrak{so}}
\newcommand{\R}{\mathbb{R}}
\newcommand{\C}{\mathbb{C}}
\newcommand{\G}{\mathcal{G}}
\newcommand{\g}{\mathfrak{g}}
\newcommand{\n}{\mathfrak{n}}
\newcommand{\dd}[1]{\frac{\partial}{\partial #1}}
\newcommand{\cF}{\mathcal{F}}
\newcommand{\rtt}{\rightthreetimes}
\newtheorem{lem}{Lemma}
\theoremstyle{remark}
\newtheorem{rem}{Remark}
\begin{document}
\begin{abstract}
We discover a new example of a generic rank 2-distri\-bution on a 5-manifold with a 6-dimensional transitive symmetry algebra, which is not present  in Cartan's classical five variables paper. It corresponds to the Monge equation $z'  = y + (y'')^{1/3}$ with invariant  quartic having root type $[4]$, and a 6-dimensional non-solvable symmetry algebra isomorphic to the semidirect product of $\sll(2)$ and the 3-dimensional Heisenberg algebra.
\end{abstract}

\maketitle

\section{A new example}
The classical work of \'Elie Cartan~\cite{Car10} on the geometry of rank 2 vector distributions on 
5-dimensional manifolds is one of the most captivating papers of \'E.~Cartan's heritage and 
the source of many other constructions in modern differential geometry~\cite{CS09, GW12, Kru12a, Kru12b, LN12, Nur05, Sag06, Wil13}.

The major part of this paper is devoted to the geometry of generic 2-distributions on 5-dimensional smooth manifolds. A 2-distribution $D$ on 5-manifold $M$ is called generic, if for any (local) basis $X_1$, $X_2$ of $D$ the vector fields $X_1$, $X_2$, $X_3  = [X_1, X_2]$, $X_4  = [X_1, X_3]$, $X_5  = [X_2, X_3]$ form a (local) frame on $M$.

It is convenient to encode generic 2-distributions by so-called Monge equations,  which are the underdetermined ODE's of the form $z'  = F (x, y, y', y'', z)$.  The corresponding vector distribution is defined on the mixed order jet space $J^{2,0}(\R, \R^2) = J^2(\R, \R) \times \R$ with the standard
coordinate system $(x, y, y_1, y_2, z)$ by two vector fields:
\[
X_1 = \dd{y}, \quad X_2 = \dd{x} + y_1 \dd{y} + y_2 \dd{y_1} + F\dd{z}.
\]
It is generic if and only if  $\frac{\partial^2F}{(\partial y_2)^2}  \ne 0$. In fact, any generic 2-distribution can be encoded by a certain Monge equation~\cite{Gou22} (see also~\cite{Kru12a,Str09}).

Given a generic 2-distribution $D$ on a 5-dimensional manifold $M$, \'Elie Cartan constructs a natural 14-dimensional principal bundle $\pi\colon \G\to M$ and a $\g_2$-valued absolute parallelism on $\G$, where
$\g_2$ is the split real form of the exceptional simple Lie algebra of type $G_2$.  In particular,
this proves that the symmetry algebra of any generic 2-distribution is finite-dimensional and of dimension $\le 14$.

Cartan also constructs the first non-trivial  local invariant,  which happens to be a quartic $\cF \in S^4(D^*)$. He proves that a given generic 2-distribution has exactly 14-dimensional symmetry algebra if and only if $\cF$ vanishes identically.  In this case, it is locally equivalent  to the distribution defined by the Monge 
equation  $z' = (y'')^2$.

He then proceeds to analyze the cases when $\cF$ does not vanish identically. His main motivation is to find 
all cases when there is no frame on $M$ itself naturally associated to the distribution $D$~\cite[p.162]{Car10}: 
\begin{quotation}
Nous ne voulons pas entrer dans la discussion compl\`ete de tous les cas qui peuvent se pr\'esenter 
comme interm\'e\-diaires entre le cas g\'en\'eral et le cas particulier o\`u la forme $\cF$ est identiquement nulle.  Nous allons simplement rechercher tous les cas dans lesquels il est impossible de trouver 
cinq expressions de Pfaff covariantes $\omega_1$, $\omega_1$, \dots, $\omega_5$, ou, autrement dit, tous les cas dans lesquels il est impossible de former cinq param\`etres diff\'erentiels lin\'eaires ind\'ependants.  
Il est n\'ecessaire pour cela que la forme $\cF$ contienne ou, un facteur lin\'eaire triple ou deux
facteurs lin\'eaires doubles, ou un facteur lin\'eaire quadruple. C'est par ce dernier cas que nous commencerons.
\end{quotation}

Clearly, such a frame on the manifold $M$ does not exist, if the symmetry algebra of the distribution 
$D$ has non-trivial stabilizer on the manifold $M$. Cartan shows that such cases may happen only if the
quartic $\cF$ has either one root of multiplicity $4$ (root type $[4]$) or two different roots with 
multiplicity $2$ (root type $[2, 2]$).

In case of root type $[4]$ he states that there is exactly one family of such distributions parametrized by an additional local invariant $I$ (see also~\cite[Section 17.4]{Sto00} for a translation of \'Elie Cartan's computations to English). When $I$ is constant\footnote{It is a general belief (see, for example,~\cite{Kru12b,Wil13}) that these systems with constant~$I$ can also be encoded by 
the simpler Monge equation $z'  = (y'' )^m$ with $m \ne -1, 1/3, 2/3, 2$. However, this family 
misses a single case of $I = \pm 3/4$, which is equivalent, for example, to $z'  = \ln(y'')$.}, 
the symmetry algebra is 7-dimensional, and the distribution itself is encoded by the following Monge equation (see~\cite[p.171,eqn.6]{Car10})\footnote{Cartan's paper has here the coefficient $\frac{5}{6}$ in place 
of $\frac{10}{3}$. But this is an arithmetic error corrected by Strazzullo~\cite{Str09}.}:
\begin{equation}\label{7dim}
z' = -1/2 \left( (y'')^2 + \tfrac{10}{3}(y')^2 + (1+I^2)y^2\right).
\end{equation}
If $I$ is non-constant then the symmetry algebra is 6-dimensional non-transitive.

Note that the above description of all generic 2-distributions with exactly 7-dimensional symmetry algebra was also proven by other authors. For example, papers~\cite{Kru12a,KT13} reprove that $7$ is the maximal 
possible dimension for the symmetry algebra for non-flat case (i.e., for the case when $\cF$ does not vanish identically). The paper~\cite[Th.2]{DZ13} reproves that all such 2-distributions are described by the Monge equation:
\[
z' = (y'')^2 + r_1(y')^2 + r_2y^2,
\]
where the pair $(r_1, r_2)$ is viewed up to the transformations $(r_1, r_2)\mapsto (c^2r_1, c^4r_2)$, $c \ne 0$, and the roots of the equation $\lambda^4 - r_1\lambda^2 + r_2 = 0$ do not constitute an arithmetic progression. If the latter condition is not satisfied, then the symmetry algebra becomes 14-dimensional.

In case of root type $[2, 2]$, Cartan finds a number of cases with exactly 6-dimensional symmetry algebra,
which is isomorphic (after complexification) to one of the following:  $\sll(2, \C) \times \sll(2, \C)$, 
$\so(3, \C) \rtt \C^3$, $\sll(2, \C) \times (\so(2, \C) \rtt \C^2)$.

\textbf{
We show that the analysis of root type $[4]$ case is incomplete and misses the generic 
2-distribution encoded by the Monge system:
\begin{equation}\label{newex}
z' = y + (y'')^{1/3}
\end{equation}
}
This equation is a part of larger family $z' = y + (y'')^m$, which has only 5-dimensional symmetry algebra for generic $m$ (see the forthcoming paper~\cite{DG13}).

\begin{lem}\label{lem1}
The 2-distribution corresponding to~\eqref{newex} has invariant quartic with root type $[4]$.
\end{lem}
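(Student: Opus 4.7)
The plan is a direct computation of Cartan's invariant quartic $\cF \in S^4(D^*)$ for the distribution of~\eqref{newex} and a verification that it has a single root of multiplicity four.

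First, I would set up the natural frame on $M$. With $F = y + y_2^{1/3}$, the distribution $D$ is spanned by $X_1 = \dd{y}$ and $X_2 = \dd{x} + y_1\dd{y} + y_2\dd{y_1} + F\dd{z}$; the genericity condition holds since $\partial^2 F/(\partial y_2)^2 = -\tfrac{2}{9} y_2^{-5/3} \ne 0$. I would then compute $X_3 = [X_1, X_2]$, $X_4 = [X_1, X_3]$, $X_5 = [X_2, X_3]$, check that these five fields form a local frame, and expand the remaining brackets $[X_i, X_j]$ in this frame. Passing to the dual coframe $\omega_1, \ldots, \omega_5$ yields the structure functions via $d\omega_i$.

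Next, I would extract $\cF$ from the structure equations. Two routes are available: (a) invoke a closed-form expression for $\cF$ in terms of $F$ and its jet-space partial derivatives, as in \cite{Str09} (equivalently, pass through Nurowski's associated conformal class and read $\cF$ off the self-dual part of the Weyl tensor); (b) run Cartan's prolongation/normalization procedure directly on the 14-dimensional principal bundle until the first essential torsion appears. In either case, substituting $F = y + y_2^{1/3}$ produces an explicit binary quartic in coordinates on $D$.

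Finally, I would factor this quartic over $\R$ and confirm that it is a nonzero scalar multiple of the fourth power of a single linear form, so the root type is $[4]$. The main obstacle is the extraction of $\cF$ in the middle step: carrying out Cartan's reduction by hand is bookkeeping-heavy, so in practice one either references the closed-form formula from the literature or delegates the algebra to symbolic computation. The final factorization is routine once the quartic is in hand.
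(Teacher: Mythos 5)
Your proposal matches the paper's proof, which likewise computes the invariant quartic by substituting $F = y + y_2^{1/3}$ into known closed-form expressions from the literature (the paper cites~\cite{GW12,Hsi80,Nur05} where you point to~\cite{Str09} and Nurowski's conformal structure, but these are the same formulas) and then reads off the root type; the authors simply omit the tedious explicit formulas. Your outline is a correct and somewhat more detailed account of the same computation.
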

\begin{proof}
We use the formulas from~\cite{GW12,Hsi80,Nur05} to compute the invariant quartic. The exact formulas are tedious and are omitted here.
\end{proof}

\begin{lem}\label{lem2}
The symmetry algebra of~\eqref{newex} is given by the following 6 vector fields:
\begin{align*}
S_1 &= x\dd{y}+\dd{y_1}+\frac{1}{2}x^2\dd{z},\\
S_2 &= x\dd{x} - y\dd{y} -2y_1\dd{y_1} -3y_2\dd{y_2},\\
S_3 &= y\dd{x} - y_1^2\dd{y_1} - 3 y_1y_2\dd{y_2} + \frac{1}{2}y^2\dd{z},\\
S_4 &= \dd{x},\\
S_5 &= \dd{y} + x\dd{z},\\
S_6 &= \dd{z}.
\end{align*}
It is isomorphic to $\sll(2)\rtt\n_3$ and preserves the fibration 
$\pi\colon J^{2,0}(\R, \R) \to J^2(\R, \R)$. The restriction of $\pi_*$
 to this symmetry algebra has a 1-dimensional kernel spanned by the 
center $\langle S_6\rangle$ and a 5-dimensional image that coincides 
with the prolongation of the standard action of the equiaffine Lie algebra 
on the plane.
\end{lem}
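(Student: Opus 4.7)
The plan is to establish the five parts of the lemma in order: (i) each $S_i$ preserves the distribution $D$; (ii) the commutation relations realise $\g\cong\sll(2)\rtt\n_3$; (iii) the symmetry algebra has no further generators, so the dimension is exactly $6$; (iv) $\ker\pi_*\cap\g=\langle S_6\rangle$; and (v) the image on $J^2(\R,\R)$ is the prolongation of the equiaffine algebra on the plane.

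For (i), the computation is streamlined by the fact that $F=y+y_2^{1/3}$ depends only on $y$ and $y_2$: it suffices to check, for each $i$, that $[S_i,X_1]$ and $[S_i,X_2]$ lie in $\langle X_1,X_2\rangle$ with function coefficients. The cases $S_4=\dd{x}$ and $S_6=\dd{z}$ are immediate. In $S_5$ the $x\dd{z}$ term is chosen precisely so that the $\partial_z$-contribution $F_y\dd{z}=\dd{z}$ in $[\dd{y},X_2]$ is cancelled by $[x\dd{z},X_2]=-\dd{z}$; the same compensation mechanism explains $S_1$, whose three summands contribute $-\dd{y}+x\dd{z}$, $\dd{y}$ and $-x\dd{z}$ respectively to $[S_1,X_2]$. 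The field $S_2$ is a weighted Euler field under which $F$ is homogeneous of weight $-1$, whence $[S_2,X_2]=-X_2$. The longest verification is for $S_3$, whose $-y_1^2\dd{y_1}-3y_1y_2\dd{y_2}$ part is the standard second-order jet prolongation of the plane field $y\dd{x}$ and whose $\tfrac12 y^2\dd{z}$ term absorbs the obstruction coming from the $y$-dependence of $F$.

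For (ii), (iv) and (v) I would tabulate all brackets $[S_i,S_j]$. The expected pattern is that $\{S_1,S_2,S_3\}$ forms an $\sll(2)$-triple (with $S_2$ diagonalisable and $S_1,S_3$ the root vectors), that $\{S_4,S_5,S_6\}$ is the Heisenberg algebra $\n_3$ with the single non-trivial relation $[S_4,S_5]=S_6$, and that the $\sll(2)$-factor acts on $\n_3$ fixing the centre $\langle S_6\rangle$ and by the standard $2$-dimensional representation on the quotient; this gives the isomorphism $\g\cong\sll(2)\rtt\n_3$. Since $\pi_*$ discards only the $\dd{z}$-components, $\ker\pi_*\cap\g=\langle S_6\rangle$, and the further projection of $\pi_*S_1,\dots,\pi_*S_5$ to the $(x,y)$-plane yields $x\dd{y}$, $x\dd{x}-y\dd{y}$, $y\dd{x}$, $\dd{x}$, $\dd{y}$, a standard basis of the equiaffine algebra $\sll(2)\rtt\R^2$; the $y_1,y_2$-components in $S_1,S_2,S_3$ are exactly the second-jet prolongations of these plane fields.

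The main obstacle is (iii). By Lemma~\ref{lem1} the quartic $\cF$ is non-trivial, so the dimension bound reproved in~\cite{Kru12a,KT13} gives $\dim\g\le 7$. To exclude $\dim\g=7$, one cannot appeal to Cartan's own root-type $[4]$ classification, since that is precisely the step this paper is correcting; instead I would invoke the independent classification~\cite[Th.2]{DZ13} of the 7-dimensional stratum, according to which every such distribution is encoded by a Monge equation whose right-hand side is polynomial of degree $2$ in $y_2$. To show that~\eqref{newex} is not equivalent to any member of that family, I would compute the Cartan invariant $I$ (or any other point invariant distinguishing the two) for~\eqref{newex} and check that it does not take the required constant value. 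Once this is done, $\dim\g\le 6$, and combined with the six explicit symmetries one concludes $\dim\g=6$.
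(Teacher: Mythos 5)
Your parts (i), (ii), (iv) and (v) are sound and essentially coincide with the paper's proof, which likewise verifies the six symmetries by direct computation, identifies $\langle S_1,S_2,S_3\rangle\cong\sll(2)$ and $\langle S_4,S_5,S_6\rangle\cong\n_3$ with $S_6$ central, and observes that the projections to $J^2(\R,\R)$ are the prolongations of the equiaffine generators. The problem is your step (iii), which is the heart of the lemma and is the one step you do not actually carry out. Having reduced to excluding the $7$-dimensional stratum, you propose to ``compute the Cartan invariant $I$ (or any other point invariant distinguishing the two) and check that it does not take the required constant value.'' This is a promissory note, not an argument, and as stated it is not even well posed: the family~\eqref{7dim} realizes \emph{every} constant value of $I$, so there is no single ``required value'' to rule out, and you would be forced back into exactly the kind of tedious curvature computation that Lemma~\ref{lem1}'s proof already declines to print. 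Worse, the missing case that this paper exhibits shows that the invariants Cartan attached to root type $[4]$ do not by themselves separate the strata, so it is not clear your proposed check could succeed without substantial extra work.

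The paper closes this gap with a purely algebraic observation that you already have all the ingredients for: the symmetry algebras of the distributions~\eqref{7dim} are all \emph{solvable}, whereas you have exhibited a subalgebra isomorphic to $\sll(2)\rtt\n_3$, which contains the semisimple $\sll(2)$; a solvable algebra has no non-solvable subalgebra, so the full symmetry algebra cannot be one of the $7$-dimensional ones, hence has dimension exactly $6$. Note also that your scruple about circularity is misplaced: what the paper corrects is Cartan's claim to have listed all root-type-$[4]$ distributions admitting no natural frame on $M$ (equivalently, with non-trivial stabilizer), not the identification of the $7$-dimensional stratum with the family~\eqref{7dim}, which is independently reproved in~\cite{Kru12a,KT13,DZ13} and is exactly the external input both you and the paper rely on. Once you accept that input, the solvability argument finishes the proof with no further computation.
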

\begin{proof} Simple computation shows that indeed all vector fields $S_1$, \dots, $S_6$
are symmetries of the distribution.  As all these symmetries do not depend on $z$, we see 
that the fibration $\{z = \operatorname{const}\}$ is indeed preserved by the symmetry algebra 
and that $S_6$ lies in its center. Projecting these vector fields to $J^2(\R, \R)$ we get 5 vector 
fields ($S_6$ projects to $0$), which preserve the contact distribution on $J^2(\R,\R)$. It is also 
easy to see that they are prolongations of 5 generators for the equiaffine action on the plane:
\[
x\dd{y},\ x\dd{x}-y\dd{y},\ y\dd{x},\ \dd{x},\ \dd{y}.
\]
We note that $\langle S_1, S_2, S_3\rangle$ is a subalgebra isomorphic to $\sll(2)$, while
$\langle S_4, S_5, S_6\rangle$ is a radical isomorphic to the 3-dimensional 
Heisenberg algebra.

Let us now show that this is indeed the full symmetry algebra of the given distribution. 
Otherwise it would be a subalgebra of a certain bigger symmetry algebra. As this distribution 
has root type $[4]$ this bigger symmetry algebra could only be one of the 7-dimensional 
symmetry algebras of~\eqref{7dim}. But these are all solvable and cannot contain a 
semisimple subalgebra isomorphic to $\sll(2)$.                                       
\end{proof}

\begin{rem}
Apparently, the distribution corresponding to~\eqref{newex} was first discovered by 
Francesco Strazzullo in his PhD thesis~\cite{Str09} as  Example~6.7.2. It is given there 
as a Monge equation
\[
z' = 1 + \exp(-4y/3)\big(y'' - 1/2(y')^2\big)^{2/3}.
\]
However, as he used Maple for computing symmetry algebras, it was not clear whether the 6-dimensional symmetry algebra he computed, is the full symmetry algebra. Although the same argument as above could be used in his case as well.
\end{rem}

\begin{rem}
To be independent of Lemma~\ref{lem1} and related calculations, let us show that 
equation~\eqref{newex} has non-vanishing  invariant quartic. Indeed,  otherwise its symmetry 
algebra would be embedded into $\g_2$. But this is not possible,  as up to 
conjugation $\g_2$ has only one non-zero element that has a centralizer of 
dimension $\ge 6$ (see~\cite{Elk72}). It is a nilpotent element that corresponds to 
the longest root of $G_2$.  Its centralizer is 8-dimensional and is isomorphic to 
the semidirect product of $\sll(2)$ and a 5-dimensional Heisenberg algebra. But there are 
no Lie algebra injective  homomorphisms $\sll(2) \rtt \n_3  \to \sll(2) \rtt \n_5$  which 
map $\sll(2)$ to $\sll(2)$ (we can assume this as all Levi subalgebras are conjugate 
to each other) and the center to the center.
\end{rem}

\begin{rem} We shall show in the forthcoming paper~\cite{DG13} that this is the only 
missing case of a generic 2-distribution with transitive symmetry algebra of dimension $\ge 6$. 
The proof is based on the observation that over complex numbers any such distribution 
admits a simply transitive 5-dimensional subalgebra of the full symmetry algebra.
\end{rem}

\section*{Acknowledgements} We would like to thank Dennis The, Trawis Willse, Boris Kruglikov for their valuable comments and independent verification of the statements in this note. We are also grateful to Ian Anderson for providing us the Maple code based on \textsc{DifferentialGeometry} package that 
computes Cartan's frame for any given Monge equation.


\begin{thebibliography}{99}
\bibitem{Car10} \'Elie Cartan, \emph{Les syst\'emes de Pfaff \'a cinq variables et les \'equations aux d\'eriv\'ees partielles du second ordre},  Annales scientifiques de l'\'E.N.S. 3e s\'erie, \textbf{27}, 1910,
109--192.

\bibitem{CS09} Andreas \v Cap and Jan Slov\'ak,  \emph{Parabolic Geometries I}, Mathematical Surveys and Monographs. American Mathematical Society, Providence, 2009.

\bibitem{DG13} Boris Doubrov, Artem Govorov, \emph{Classification of generic 2-distributions on
5-manifolds with simply transitive symmetry algebras}, in preparation.
 
\bibitem{DZ13} Boris Doubrov, Igor Zelenko, \emph{Geometry of rank 2 distributions with nonzero Wilczynski invariants and affine control systems with one input}, Arxiv preprint arXiv:1301.2797,  pages 1--27, 2013.

\bibitem{Elk72} Gordon B.~Elkington, \emph{Centralizers of Unipotent Elements in Semisimple Algebraic Groups}, Journal of Algebra \textbf{23}, 1972, 137--163.

\bibitem{Gou22} \'Edouard Goursat, \emph{Le\c cons  sur le probl\`eme  de Pfaff},  Librairie Scientifique J.
Hermann, Paris, 1922.

\bibitem{GW12} C.~Robin Graham and Travis Willse, \emph{Parallel tractor extension and ambient metrics of holonomy split $G_2$}, J.~Diff.~Geom., \textbf{92}, 2012, 463--505.

\bibitem{Hsi80} S.H.C.~Hsiao, \emph{On Cartan-Sternberg's  example of the reduction of a $G$-structure
II-III}, Tamkang J.~Math., \textbf{11}, 1980, 277--312.

\bibitem{Kru12a} Boris Kruglikov, \emph{Lie theorem via rank 2 distributions (integration of PDE of 
class  $\omega = 1$)}, J.~Nonlinear Math.~Phys. \textbf{19}, 2012, 1250011.

\bibitem{Kru12b} Boris Kruglikov, \emph{The gap phenomenon in the dimension study of finite type systems}, Central European J.~Math., \textbf{10}, 2012, 1605--1618.

\bibitem{KT13} Boris Kruglikov,  Dennis The, \emph{The gap phenomenon in parabolic geometries}, Arxiv preprint arXiv:1303.1307,  pages 1--54, 2013.

\bibitem{LN12} Thomas Leistner and  Pawe\l{} Nurowski,  \emph{Conformal  structures  with  $G_{2(2)}$-ambient metrics}, Ann.~Sc.~Norm.~Super.~Pisa Cl.~Sci., \textbf{11}, 2012, 407--436.

\bibitem{Nur05} Pawe\l{} Nurowski, \emph{Differential equations and conformal structures}, 
Journ.~Geom.~Phys. \textbf{55}, 2005, 19--49.

\bibitem{Sag06} Katja  Sagerschnig, \emph{Split octonions and generic rank two distributions in dimension five}, Arch.~Math. (Brno), \textbf{42}, 2006, 329--339.

\bibitem{Sto00} Olle Stormark, \emph{Lie's Structural Approach to PDE Systems}, Encyclopedia of
Mathematics and Its  Applications. Cambridge University  Press, Cambridge, 2000.

\bibitem{Str09} Francesco Strazzullo, \emph{Symmetry Analysis of General Rank-3 Pfaffian Systems in Five Variables}, PhD thesis, Utah State University, 2009.

\bibitem{Wil13} Travis Willse, \emph{Highly symmetric 2-plane fields on 5-manifolds and Heisenberg
5-group holonomy}, Arxiv  preprint arXiv:1302.7163,  pages 1--26, 2013.
\end{thebibliography}
\end{document}